\newtheorem{theorem}{Theorem}[section]
\newtheorem{lemma}[theorem]{Lemma}
\newtheorem{corollary}[theorem]{Corollary}
\newtheorem{remark}[theorem]{Remark}
\newtheorem{conjecture}[theorem]{Conjecture}
\title{A note on the periodic Hilbert Transform on a strip}
\date{\today}
\author{Javier G\'omez-Serrano, Sieon Kim}
\begin{document}

\maketitle

\begin{abstract}
In this note we prove a conjecture by Constantin--Strauss--V\u{a}rv\u{a}ruc\u{a} 
related to the finite depth water wave problem, tightening their results. The proof uses identities related to Jacobi Theta functions. We also discuss potential implications of the improvement.
\end{abstract}

\section{Introduction}

In this paper we will be concerned with the periodic Hilbert transform on a strip. Let $d > 0$ and let
\[\mathcal{R}_{d}=\{(x,y) \in \mathbb{R}^2:\ -d < y < 0\}\]
be the strip of depth $d$. Then, if $w \in C^{0,\alpha}(\mathbb{R})$, is $2\pi$-periodic, has zero mean and the Fourier series expansion
\[w(x)=\sum_{n=1}^\infty a_n \cos (nx)\,+\,\sum_{n=1}^\infty b_n \sin (nx),\qquad x \in \mathbb{R},\]
the Hilbert transform operator $\mathcal{C}_d$ acts as
\begin{equation}\label{hfs}
\big(\mathcal{C}_d(w)\big)(x)=\sum_{n=1}^\infty a_n \coth
 (nd)\sin(nx)-\sum_{n=1}^\infty b_n\coth (nd)\cos (nx), \qquad x \in \mathbb{R}\,.
\end{equation}

The Hilbert transform is of central importance in the study of the water wave problem, since it arises in the context of the Dirichlet-Neumann operator and the linearized equation around a flat wave. See 
\cite{King:hilbert-transforms-i,King:hilbert-transforms-ii} for further references on the Hilbert Transform and \cite{Haziot-Hur-Strauss-Toland-Wahlen-Walsh-Wheeler:water-waves-survey,Lannes:water-waves-book} for comprehensive surveys on the water wave problem.

In \cite{Constantin-Strauss-Varvaruca:large-amplitude-steady-downstream-water-waves}, the authors conjectured the following:

\begin{conjecture}[\cite{Constantin-Strauss-Varvaruca:large-amplitude-steady-downstream-water-waves} Lemma 1, Remark, p.252, abridged, see also \cite{Haziot-Strauss:amplitude-bounds-steady-rotational-water-waves} Lemma 2)]

Let us recall from Appendix A in \cite{Constantin-Strauss-Varvaruca:global-bifurcation-gravity-water-waves} that, for
any smooth $2\pi$-periodic function $F: {\mathbb R} \to {\mathbb R}$ with mean zero over each period,
we have
\begin{equation}\label{fc0}
(\mathcal{C}_{kh}(F))(x)=\frac{1}{2\pi} \text{PV} \int_{-\pi}^\pi \beta_{kh}(x-s)F(s)\,ds\,,\qquad x \in \mathbb{R}\,,
\end{equation}
where (with $d=kh$) the kernel $\beta_{d}: \mathbb{R} \setminus 2\pi {\mathbb Z}\to {\mathbb R}$,
is given by
\begin{align}  \label{betadef}
\beta_{d}(s) &= -\frac{s}{d}  +  \frac{\pi}{d}\coth\Big(\frac{\pi s}{2d}\Big)  +  \frac{\pi}{d} \sum_{n=1}^{\infty}
\frac{2\sinh(\frac{\pi s}{d})}  {\cosh(\frac{\pi s}{d})-\cosh(\frac{2\pi^2n }{d})}\\
&=  -\frac{s}{d} + \frac\pi{d}  \sum_{n=-\infty}^\infty  \left\{ \coth\left(\frac\pi{2d}(s-2\pi n)\right) + \text{sgn}(n) \right\}.\nonumber
\end{align}

Moreover, let us write $x=\pi^2/2d$ and consider $\beta_d(\pi/2)$
 as a function of $x\in(0,\infty)$.

Since letting $d\to \infty$ is equivalent to letting $x\to 0$ in the formula
$$
\pi\beta_d(\pi/2)  =  2x\coth(x/2) -x  -  4x\sinh(x) \sum_{n=1}^\infty \frac1{\cosh(4n x)-\cosh(x)},  $$
we get
\[\pi\lim_{d\to \infty}\beta_d(\pi/2)=4-8\sum_{k=1}^{\infty}\frac{1}{16n^2-1}=\pi\,.\]
It is an interesting conjecture whether actually
\[\beta_d(\pi/2)\geq 1\quad\text{for all }d\in (0,\infty)\,.\]
Numerical computation in Octave/Matlab suggests that the issue is quite subtle.
    \label{main_conjecture}
\end{conjecture}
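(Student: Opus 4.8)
The plan is to evaluate the right-hand side of the displayed formula for $\pi\beta_d(\pi/2)$ in closed form, to recognize it (up to rescaling) as the square of a Jacobi theta function, to apply the imaginary/modular transformation so as to pass from the variable $x=\pi^2/2d$ to the variable $d$, and to read off the bound from the transformed expression, where it is self‑evident.

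The first step is an elementary reduction of the given formula. Using $\cosh(4nx)-\cosh(x)=2\sinh\!\big(\tfrac{(4n+1)x}{2}\big)\sinh\!\big(\tfrac{(4n-1)x}{2}\big)$ together with $\coth A-\coth B=\sinh(B-A)/(\sinh A\,\sinh B)$ (applied with $B-A=x$), the series $4x\sinh(x)\sum_{n\ge 1}(\cosh(4nx)-\cosh x)^{-1}$ collapses to $2x\sum_{n\ge 1}\big[\coth(\tfrac{(4n-1)x}{2})-\coth(\tfrac{(4n+1)x}{2})\big]$. Expanding $\coth u-1=2\sum_{k\ge 1}e^{-2ku}$ and $2x\coth(x/2)-x=x+4x\sum_{k\ge 1}e^{-kx}$, and rearranging absolutely convergent series, one arrives at
\[
\pi\beta_d(\pi/2)\;=\;x+4x\sum_{j\ge 0}(-1)^j\frac{1}{e^{(2j+1)x}-1}\;=\;x\Big(1+2\sum_{k\ge 1}\operatorname{sech}(kx)\Big)\;=\;x\,\theta_3\!\big(e^{-x}\big)^2 ,
\]
where $\theta_3(q)=\sum_{n\in\mathbb Z}q^{n^2}$ and the last equality is the Lambert-series form $\theta_3(q)^2=1+4\sum_{k\ge 1}q^k/(1+q^{2k})$ of Jacobi's two-square theorem. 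All of this is routine; one need only check that every series converges absolutely for $x>0$, which justifies the rearrangements.

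The key step is the Jacobi imaginary transformation $\theta_3(e^{-\pi/t})^2=t\,\theta_3(e^{-\pi t})^2$ (equivalently, Poisson summation applied to $t\mapsto\operatorname{sech}(at)$, whose Fourier transform is again a hyperbolic secant). Taking $t=x/\pi$ and using $\pi^2/x=2d$ turns the previous display into
\[
\pi\beta_d(\pi/2)=x\,\theta_3\!\big(e^{-x}\big)^2=\pi\,\theta_3\!\big(e^{-2d}\big)^2,\qquad\text{that is,}\qquad \beta_d(\pi/2)=\theta_3\!\big(e^{-2d}\big)^2=1+2\sum_{k=1}^{\infty}\operatorname{sech}(2dk).
\]
Since $\theta_3(q)=1+2\sum_{n\ge 1}q^{n^2}\ge 1$ for all $q\in[0,1)$ — equivalently, every term $\operatorname{sech}(2dk)$ is positive — we conclude $\beta_d(\pi/2)=\theta_3(e^{-2d})^2\ge 1$ for all $d\in(0,\infty)$, in fact strictly, the value $1$ being approached only as $d\to\infty$. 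This also explains the reported numerical subtlety: near $d=\pi^2/2$ (that is, $x=1$) one has $\beta_d(\pi/2)-1=\theta_3(e^{-2d})^2-1\approx 4e^{-2d}\approx 2\times 10^{-4}$.

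The obstacle is conceptual rather than technical. In the variable $x\propto 1/d$ in which the formula is stated, the three summands are of comparable magnitude and the sign of their combination is genuinely delicate, so no term-by-term estimation on that side can work. The heart of the argument is the recognition that the expression equals $x\,\theta_3(e^{-x})^2$ together with the modular transformation to the dual variable $d$, in which $\pi\beta_d(\pi/2)$ becomes $\pi$ plus a sum of positive terms. The remaining ingredients — the product-to-sum collapse, the Lambert-series identity, and the self-duality of $\operatorname{sech}$ under the Fourier transform — are all classical.
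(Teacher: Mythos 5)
Your proof is correct and follows essentially the same route as the paper: reduce the series to a Lambert series, identify it as $\vartheta_3(0,e^{-x})^2$ via Jacobi's two-square theorem, and apply the theta modular transformation to land in the dual variable where positivity is manifest. The only cosmetic differences are that you pass through the $\operatorname{sech}$ form of the Lambert series and conclude directly from term-by-term positivity rather than from monotonicity plus the known limit at $d\to\infty$; incidentally, your bookkeeping $\pi^2/x=2d$ matches the conjecture's normalization $x=\pi^2/2d$, whereas the paper's proof writes $x=\pi^2/d$ (a harmless inconsistency that does not affect the bound).
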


Our main result in this note is the following:

\begin{theorem}
\label{main_thm}
    Conjecture \ref{main_conjecture} is true.
\end{theorem}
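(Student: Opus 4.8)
\emph{Step 1 (set up and rearrange the cotangent series).} The plan is to evaluate the series \eqref{betadef} at the single point $s=\pi/2$, recognize the outcome as the square of a Jacobi theta function, and then read off the bound. Put $x=\pi^2/(2d)$, so that $\pi/(2d)=x/\pi$ and the shifted arguments $\frac{\pi}{2d}\bigl(\tfrac\pi2-2\pi n\bigr)$ become $\tfrac x2(1-4n)$. In the second representation in \eqref{betadef} with $s=\pi/2$: since $\coth u-\operatorname{sgn}(u)\to0$ exponentially, the sum over $n\in\mathbb Z$ converges absolutely, so, using that $\coth$ is odd, I pair the $n$-th and $(-n)$-th terms so that the $\operatorname{sgn}(n)$-contributions cancel. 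The surviving arguments are $\tfrac{mx}2$ with $m$ ranging over the positive odd integers, weighted by the nontrivial Dirichlet character modulo $4$, $\chi_{-4}(m)=(-1)^{(m-1)/2}$:
\[ \pi\,\beta_d(\pi/2)=x+2x\sum_{\substack{m\ge1\\ m\ \mathrm{odd}}}\chi_{-4}(m)\Bigl(\coth\tfrac{mx}2-1\Bigr). \]

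\emph{Step 2 (Lambert resummation).} Substitute $\coth\tfrac{mx}2-1=2\sum_{j\ge1}e^{-jmx}$ and interchange the (absolutely convergent) double sum; the inner sum over $m$ is $\sum_{m\ge1}\chi_{-4}(m)t^m=t/(1+t^2)$ at $t=e^{-jx}$, i.e.\ $1/(2\cosh(jx))$. Hence, writing $\vartheta_3(0,q):=\sum_{n\in\mathbb Z}q^{n^2}$,
\[ \pi\,\beta_d(\pi/2)=x\Bigl(1+2\sum_{j=1}^\infty\frac1{\cosh(jx)}\Bigr)=x\,\vartheta_3(0,e^{-x})^2, \]
the last equality being the classical Lambert-series identity $\vartheta_3(0,q)^2=1+4\sum_{j\ge1}q^j/(1+q^{2j})$ (equivalently Jacobi's two-squares formula $r_2(N)=4\sum_{d\mid N}\chi_{-4}(d)$) applied at $q=e^{-x}$.

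\emph{Step 3 (theta transformation and conclusion).} By Poisson summation, $\sum_n e^{-xn^2}=\sqrt{\pi/x}\,\sum_n e^{-\pi^2n^2/x}$, that is, $\vartheta_3(0,e^{-x})^2=\tfrac\pi x\,\vartheta_3(0,e^{-\pi^2/x})^2$. Since $\pi^2/x=2d$, the factors of $x$ cancel and we reach the exact identity
\[ \beta_d(\pi/2)=\vartheta_3\bigl(0,e^{-2d}\bigr)^2=\Bigl(1+2\sum_{n=1}^\infty e^{-2dn^2}\Bigr)^2. \]
Every summand on the right is positive, so $\beta_d(\pi/2)>1$ for all $d\in(0,\infty)$, the right-hand side decreasing to $1$ as $d\to\infty$. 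This proves Conjecture \ref{main_conjecture}, and in fact sharpens it: the constant $1$ is approached but never attained, in accordance with the stated limit.

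\emph{Main obstacle.} The one genuinely delicate step is Step 1---keeping the bookkeeping of the $\operatorname{sgn}(n)$ terms and of the emergence of $\chi_{-4}$ straight, so as to land exactly on a Lambert series that the standard $\vartheta_3$ identities recognize. (Alternatively, one could apply Poisson summation directly to $\sum_n e^{-2dn^2}$ to reproduce the closed form, but the computation above makes transparent why the innocuous-looking kernel \eqref{betadef} is secretly $x\,\vartheta_3(0,e^{-x})^2$ at $s=\pi/2$, and why the numerics near $d\to\infty$ looked subtle: $\beta_d(\pi/2)-1\sim 4e^{-2d}$.) Once the closed form is in hand, Steps 2--3 use only standard facts about $\vartheta_3$.
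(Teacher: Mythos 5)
Your proof is correct and follows essentially the same route as the paper: both reduce the quantity to $x\,\vartheta_3(0,e^{-x})^2$ via Jacobi's two-squares Lambert-series identity and conclude with the modular transformation $x\,\vartheta_3^2(0,e^{-x})=\pi\,\vartheta_3^2(0,e^{-\pi^2/x})$, the only difference being that you start from the bilateral $\coth$ representation and land on the equivalent form $1+2\sum_{j\ge1}1/\cosh(jx)$ rather than on the divisor-sum form $1+4\sum_{n\ge1}\bigl(q^{4n-3}/(1-q^{4n-3})-q^{4n-1}/(1-q^{4n-1})\bigr)$ obtained in the paper from the $\cosh$-difference representation. A minor point in your favor: with the conjecture's normalization $x=\pi^2/(2d)$ the closed form is indeed $\vartheta_3(0,e^{-2d})^2$ as you write, whereas the proof section of the paper sets $x=\pi^2/d$ and ends with $\vartheta_3(0,e^{-d})^2$; this discrepancy is immaterial for the bound $\beta_d(\pi/2)\ge1$ and for monotonicity.
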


\begin{remark}
The subtlety in this conjecture lies in the fact that the function grows extremely slowly for small values of $x$, as seen in Figure \ref{fig:beta}. Indeed, it is difficult to obtain a lower bound for the function via conventional means (such as for example Taylor expansions) since $\beta_d(\pi/2)$ remains so close to $1$, corroborated by how $x=0.1$ evaluates to $1$ with $42$ digits of accuracy according to Mathematica simulations. Hence, it is a non-trivial task to estimate the function's growth around $0$. Instead we establish the lower bound by proving the equivalence between the function $\beta$ and a Jacobi Theta function, and then establishing monotonicity.
\begin{figure}[h]
    \centering    \includegraphics[scale=0.8]{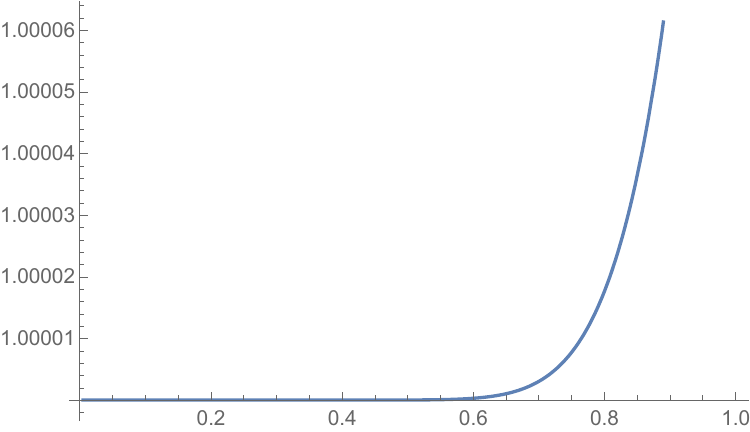}
    \caption{The function $\beta_d\left(\frac{\pi}{2}\right)$ as a function of $x$.}
    \label{fig:beta}
\end{figure}

\end{remark}


We defer the proof to Section \ref{section:mainthm}. We now mention several small improvements as corollaries of the theorem. They are related to better bounds on some of the constants implied by the better bound on the function $\beta_d\left(\frac{\pi}{2}\right)$. For simplicity we refer to the corresponding papers for the definitions:

\begin{corollary}[Strengthening of Theorem 4 of \cite{Constantin-Strauss-Varvaruca:large-amplitude-steady-downstream-water-waves}]

\label{bound} Let $\Upsilon\geq 0$. Then,
along the whole global bifurcation curve ${\mathcal K}_-$, we have the estimate
\begin{subequations}\label{Fii}
\begin{equation}
v(0)-v(\pi) \le \sqrt{\frac{36g^2}{\Upsilon^4}+\frac{24\pi g}{\Upsilon^2k\beta_{hk}(\tfrac{\pi}{2})}} -\frac{6g}{\Upsilon^2} \le \sqrt{\frac{36g^2}{\Upsilon^4}+\frac{24\pi g}{\Upsilon^2k}} -\frac{6g}{\Upsilon^2}  
\quad \text{if}\quad \Upsilon>0,
\end{equation}
and
\begin{equation} v(0)-v(\pi) < \frac{2\pi}{k\beta_{kh}(\tfrac{\pi}{2})} \le
\frac{2\pi}{k}\quad\text{if}\quad \Upsilon=0. \end{equation}
\end{subequations}

\end{corollary}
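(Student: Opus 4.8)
The plan is to derive both chains of inequalities in \eqref{Fii} directly, by inserting the lower bound $\beta_{hk}(\tfrac{\pi}{2})\geq 1$ provided by Theorem~\ref{main_thm} into the estimate that is already produced by the argument behind Theorem~4 of \cite{Constantin-Strauss-Varvaruca:large-amplitude-steady-downstream-water-waves}. More precisely, in each line of \eqref{Fii} the first inequality is the ``sharp'' form of their bound, in which the constant $\beta_{hk}(\tfrac{\pi}{2})$ is retained as such, while the second inequality is exactly what one obtains after using $\beta_{hk}(\tfrac{\pi}{2})\geq 1$.

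First I would revisit the proof of Theorem~4 in \cite{Constantin-Strauss-Varvaruca:large-amplitude-steady-downstream-water-waves} and check that, once the relevant nonlocal quantity along ${\mathcal K}_-$ is written through the kernel identity \eqref{fc0}--\eqref{betadef} and $\beta_{hk}$ is evaluated at the point $s=\tfrac{\pi}{2}$, their chain of inequalities terminates in
\[
v(0)-v(\pi)\ \leq\ \sqrt{\tfrac{36g^2}{\Upsilon^4}+\tfrac{24\pi g}{\Upsilon^2 k\,\beta_{hk}(\pi/2)}}\ -\ \tfrac{6g}{\Upsilon^2}\qquad(\Upsilon>0)
\]
and, when $\Upsilon=0$, in $v(0)-v(\pi)< \tfrac{2\pi}{k\,\beta_{kh}(\pi/2)}$, \emph{without} replacing $\beta_{hk}(\tfrac{\pi}{2})$ by any weaker explicit constant along the way; that is, the factor $\beta_{hk}(\tfrac{\pi}{2})$ must survive in the denominator all the way to the last line of their argument. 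This step is pure bookkeeping, but it is the only place where anything substantive has to be verified.

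For the second inequality in each line I would then invoke Theorem~\ref{main_thm}: since $\beta_{hk}(\tfrac{\pi}{2})\geq 1$ and $t\mapsto 1/t$ is decreasing on $(0,\infty)$, we have $\tfrac{24\pi g}{\Upsilon^2 k\,\beta_{hk}(\pi/2)}\leq \tfrac{24\pi g}{\Upsilon^2 k}$ and $\tfrac{2\pi}{k\,\beta_{kh}(\pi/2)}\leq \tfrac{2\pi}{k}$; since $t\mapsto\sqrt{t}$ is increasing and subtracting the fixed quantity $\tfrac{6g}{\Upsilon^2}$ preserves the ordering, \eqref{Fii} follows. The main obstacle, as noted, is the verification in the previous paragraph — in particular one must also confirm that the expression under the radical stays nonnegative for every admissible choice of the parameters $g,\Upsilon,k,h$, so that the closed form on the right-hand side of \eqref{Fii} is legitimate; the case $\Upsilon=0$ is then recovered either directly from that argument or as the limit $\Upsilon\to 0^+$ of the case $\Upsilon>0$.
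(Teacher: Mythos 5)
Your proposal matches the paper's (implicit) argument exactly: the paper offers no separate proof of this corollary, treating it as an immediate consequence of tracing the $\beta_{hk}(\tfrac{\pi}{2})$ factor through the proof of Theorem~4 in \cite{Constantin-Strauss-Varvaruca:large-amplitude-steady-downstream-water-waves} and then inserting the bound $\beta_{hk}(\tfrac{\pi}{2})\geq 1$ from Theorem~\ref{main_thm}, which is precisely what you do. The monotonicity observations ($t\mapsto 1/t$ decreasing, $t\mapsto\sqrt{t}$ increasing) correctly yield the second inequality in each line.
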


\begin{corollary}[Strengthening of Theorem 1.3 of \cite{Haziot-Strauss:amplitude-bounds-steady-rotational-water-waves}]

	Consider a smooth water wave that belongs to the bifurcation curve $\mathcal C$ 
	in the adverse case $\gamma>0$ and assume that \textit{either} the slope $|\eta'|$ 
	or the convexity $|\eta''|$ of the wave is bounded.  
	Then the wave amplitude $\mathscr{A}$ (the elevation difference between the crest and trough) 
	is uniformly bounded by a certain constant provided $\gamma$ is sufficiently small.  
	The upper bound depends only on a certain explicit function of 
	the constants $g, Q, m$ and the conformal depth $d$.  
	
	As an example, the upper bound can be chosen to be ${12\pi}$, if we assume that each of the  quantities $\gamma, Q\gamma, |m|\gamma^2$, as well as either $N\gamma^2$ or  $M\gamma^4$, 
are less than certain explicit functions of $g$ and $d$.
\end{corollary}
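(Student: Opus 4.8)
The plan is to obtain this corollary by feeding the sharp lower bound $\beta_{kh}(\tfrac{\pi}{2})\ge 1$, established in Theorem \ref{main_thm}, into the amplitude estimate of \cite{Haziot-Strauss:amplitude-bounds-steady-rotational-water-waves}. The starting observation is that the amplitude bound proved in their Theorem 1.3 is an explicit function of the physical data in which $\beta_{kh}(\tfrac{\pi}{2})$ enters (via their Lemma 2, the conditional counterpart of Conjecture \ref{main_conjecture}) only as a lower-bounded factor sitting in a denominator. Since $1/\beta_{kh}(\tfrac{\pi}{2})\le 1$, the entire content of the corollary reduces to two tasks: verifying that every occurrence of the $\beta$-factor in their derivation may legitimately be replaced by the value $1$, and then tracking the resulting constants. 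The same mechanism is visible in the simpler setting of Corollary \ref{bound}, where $\beta_{kh}(\tfrac{\pi}{2})\ge 1$ turns the velocity bound $2\pi/(k\beta_{kh}(\tfrac{\pi}{2}))$ into the clean $2\pi/k$; here I carry out the analogous substitution one layer further downstream, at the level of the physical amplitude rather than the velocity difference.

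Concretely, I would follow the hodograph/conformal formulation of \cite{Haziot-Strauss:amplitude-bounds-steady-rotational-water-waves} and isolate the step where their lower bound on $\beta_{kh}(\tfrac{\pi}{2})$ is invoked. The amplitude $\mathscr{A}$ is controlled through Bernoulli's relation on the free surface, which expresses $g\mathscr{A}$ as a difference of squared surface speeds between crest and trough; this difference is bounded by the $\beta$-dependent trace estimate together with correction terms generated by the vorticity $\gamma$. The regularity hypothesis is what closes the estimate: a bound $|\eta'|\le N$ on the slope or $|\eta''|\le M$ on the convexity yields the two alternative correction scales, proportional to $N\gamma^2$ and to $M\gamma^4$ respectively, which is exactly the dichotomy recorded in the statement. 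Carrying $\beta_{kh}(\tfrac{\pi}{2})\ge 1$ through these steps replaces each $1/\beta_{kh}(\tfrac{\pi}{2})$ by $1$ and produces an amplitude bound that is an explicit function of $g,Q,m,d,\gamma$ and the chosen regularity constant; letting $\gamma\to 0$ collapses the vorticity corrections and leaves the qualitative bound $F(g,Q,m,d)$ claimed in the first part of the corollary.

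To extract the explicit value $12\pi$ I would then impose the quantitative smallness conditions. Each correction term is a product of a physical quantity with a positive power of $\gamma$ — namely $\gamma$, $Q\gamma$, $|m|\gamma^2$, and $N\gamma^2$ or $M\gamma^4$ — so by requiring each of these products to lie below an explicit threshold depending only on $g$ and $d$, the total correction is absorbed into a fixed fraction of the leading term. With $\beta_{kh}(\tfrac{\pi}{2})\ge 1$ the leading term is itself an explicit multiple of the velocity bound, and after using the relation between $k$ and the conformal depth $d$ the aggregate simplifies to $12\pi$; the numerical coefficient can be traced back to the factor $6$ pervading the constants $6g/\Upsilon^2$, $36g^2$ and $24\pi g$ of Corollary \ref{bound}, which governs the same Bernoulli conversion.

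The main obstacle is bookkeeping rather than conceptual. I must confirm that the dependence on $\beta_{kh}(\tfrac{\pi}{2})$ is \emph{monotone} throughout the Haziot--Strauss derivation — that a larger value of $\beta_{kh}(\tfrac{\pi}{2})$ can only decrease the resulting amplitude bound — so that replacing their weaker estimate by the sharp $\beta_{kh}(\tfrac{\pi}{2})\ge 1$ is valid at every stage and does not interact adversely with the vorticity corrections. A secondary subtlety is verifying that the smallness thresholds are genuinely functions of $g$ and $d$ alone, which requires tracking the dependence of each correction term on $Q$ and $m$ and checking that these parameters are eliminated in favour of the stated products with $\gamma$. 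Once monotonicity and these thresholds are pinned down, the corollary follows by direct substitution of Theorem \ref{main_thm} into the Haziot--Strauss amplitude bound.
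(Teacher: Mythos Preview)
Your proposal is essentially correct, and in fact it is more detailed than what the paper itself provides: the paper gives no proof of this corollary at all. It is simply stated as one of ``several small improvements as corollaries of the theorem,'' with the remark that they ``are related to better bounds on some of the constants implied by the better bound on the function $\beta_d(\pi/2)$,'' and the reader is referred to \cite{Haziot-Strauss:amplitude-bounds-steady-rotational-water-waves} for the definitions and, implicitly, for the derivation. Your identification of the mechanism --- that $\beta_{kh}(\tfrac{\pi}{2})$ enters the Haziot--Strauss amplitude estimate only through a denominator, so that Theorem~\ref{main_thm} allows one to replace their weaker lower bound on $\beta$ by $1$ and thereby sharpen every constant downstream --- is exactly the intended reasoning.

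One caution on the details: your attempt to trace the constant $12\pi$ back to ``the factor $6$ pervading the constants $6g/\Upsilon^2$, $36g^2$ and $24\pi g$ of Corollary~\ref{bound}'' is speculative and almost certainly not the actual arithmetic, since those constants belong to the Constantin--Strauss--V\u{a}rv\u{a}ruc\u{a} setting (parameter $\Upsilon$), not the Haziot--Strauss setting (parameters $\gamma, Q, m, d$). To genuinely verify the numerical value $12\pi$ and the claimed smallness thresholds you would have to rerun the explicit inequalities in the proof of Theorem~1.3 of \cite{Haziot-Strauss:amplitude-bounds-steady-rotational-water-waves} with $\beta_{kh}(\tfrac{\pi}{2})\ge 1$ substituted throughout; the present paper does not display that computation, so your proposal and the paper are on equal footing there.
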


\section{Proof of Theorem 
\ref{main_thm}}
\label{section:mainthm}


\begin{proof}
We will show that $\beta_d\left(\frac{\pi}{2}\right) \geq 1$ for all $d \in (0,\infty)$ where $x=\pi^2/d$ and
\begin{align}
\label{betapidefinition}
    \pi\beta_d(\frac{\pi}{2})=2x\coth \left(\frac{x}{2} \right)-x-4x\sinh(x)\sum_{n=1}^\infty \frac{1}{\cosh(4nx)-\cosh(x)}.
\end{align}
We will also deduce that $\beta_d\left(\frac{\pi}{2}\right)$ is strictly increasing in $x$ for $x>0$.

We use the following intermediate lemma to begin our proof.

\begin{lemma}
$$\begin{aligned}
\sum_{n=1}^\infty \frac{2\sinh(x)}{\cosh(4nx)-\cosh(x)} 
&=\sum_{n=1}^\infty \frac{\sinh(x)}{\sinh(\frac{4n+1}{2}x)\sinh(\frac{4n-1}{2}x)}.\\
\end{aligned}$$
\end{lemma}
\begin{proof}
    Use the difference to product rule for hyperbolic cosine and simplify:
    $$\cosh(a)-\cosh(b)=2\sinh\left(\frac{a+b}{2}\right)\sinh\left(\frac{a-b}{2}\right).$$
\end{proof}

Let $q=e^{-x}$. Denoting the sum by $2S$ and expanding in terms of $q$, we obtain
$$2S=\sum_{n=1}^\infty \frac{\sinh(x)}{\sinh(\frac{4n+1}{2}x)\sinh(\frac{4n-1}{2}x)}=2\sum_{n=1}^\infty \frac{(q^{-1}-q)}{(q^{-(4n+1)/2}-q^{(4n+1)/2})(q^{-(4n-1)/2}-q^{(4n-1)/2})}.$$
\color{black}Simplifying by multiplying $q^{(4n+1)/2}q^{(4n-1)/2}$ in the numerator and denominator, we obtain
$$\sum_{n=1}^\infty \frac{(q^{4n-1}-q^{4n+1})}{(1-q^{4n+1})(1-q^{4n-1})}=\sum_{n=1}^\infty \frac{((q^{4n-1}-1)+(1-q^{4n+1}))}{(1-q^{4n+1})(1-q^{4n-1})},$$
which leads to
$$S=\sum_{n=1}^\infty \frac{\sinh(x)}{\cosh(4nx)-\cosh(x)}=\sum_{n=1}^\infty \left(\frac{1}{1-q^{4n-1}}-\frac{1}{1-q^{4n+1}}\right)=\sum_{n=1}^\infty \left(\frac{q^{4n-1}}{1-q^{4n-1}}-\frac{q^{4n+1}}{1-q^{4n+1}}\right).$$

By \cite[Theorem 259, Chapter 12]{Knopp:infinite-series-book}, a series of the form $\sum_k a_kz^k/(1-z^k)$ (known as a \textit{Lambert series}) converges if $\sum_k a_kz^k$ converges and $|z| \neq 1$. This implies that since $q \in (0,1)$, $\sum_kq^k/(1-q^k)$ converges since $\sum_k q^k$ converges, and for the same reason so do $\sum_n q^{4n-1}/(1-q^{4n-1})$ and $\sum_n q^{4n+1}/(1-q^{4n+1})$ for $x\in (0,\infty)$, implying that the whole series converges for $x \in (0,\infty)$. As a result, we may split and rearrange the terms as follows: 
$$\begin{aligned}
S=\sum_{n=1}^\infty \left(\frac{q^{4n-1}}{1-q^{4n-1}}-\frac{q^{4n+1}}{1-q^{4n+1}}\right)&=\left(\frac{q^3}{1-q^3}-\frac{q^5}{1-q^5}\right)+\left(\frac{q^7}{1-q^7}-\frac{q^9}{1-q^9}\right)+\cdots\\
&=\frac{q}{1-q}-\left(\left(\frac{q}{1-q}-\frac{q^3}{1-q^3}\right)+\left(\frac{q^5}{1-q^5}-\frac{q^7}{1-q^7}\right)+\cdots\right)\\
&=\frac{q}{1-q}-\sum_{n=1}^\infty \left(\frac{q^{4n-3}}{1-q^{4n-3}}-\frac{q^{4n-1}}{1-q^{4n-1}}\right).
\end{aligned}$$
Plugging this back into \eqref{betapidefinition} and expanding each term in $q$, we obtain
$$\begin{aligned}
\pi\beta_d\left(\frac{\pi}{2}\right)&=x\left[2\coth\left(\frac{x}{2}\right)-1-4S\right]\\
&=x\left[2\left(\frac{1+q}{1-q}\right)-1-4\left(\frac{q}{1-q}-\sum_{n=1}^\infty \left(\frac{q^{4n-3}}{1-q^{4n-3}}-\frac{q^{4n-1}}{1-q^{4n-1}}\right)\right)\right],
\end{aligned}$$
and simplifying this expression yields
\begin{align}
\label{eq:betarepresentation}
\pi\beta_d\left(\frac{\pi}{2}\right)=x\left[ 1+4\sum_{n=1}^\infty \left(\frac{q^{4n-3}}{1-q^{4n-3}}-\frac{q^{4n-1}}{1-q^{4n-1}}\right)\right].
\end{align}
We now introduce the elliptic theta function $\vartheta_3(z|\tau)$, defined in \cite{Mumford:tata-theta-i} as
$$\vartheta_3(z|\tau)=\sum_{n=-\infty}^\infty e^{\pi i \tau n^2+2\pi i n z},$$
where $z \in \mathbb{C}$ and $\tau \in \mathbb{H},$ the upper half complex plane. Notice that setting $z=0$ and writing $q=e^{\pi i \tau}$ gives the representation
\begin{align}
\label{eq:theta3_series}
\vartheta_3(0,q)=\vartheta_3\left(0|\tau \right)=\sum_{n=-\infty}^\infty q^{n^2},
\end{align}
with $\tau=ix/\pi$ having positive imaginary part for $x>0$. By \cite[Theorem 312, Chapter XVII]{Hardy-Wright:introduction-theory-numbers} (see \cite{Jacobi:fundamenta} for the original proof), we have that 
$$\left(\sum_{n=-\infty}^\infty q^{n^2}\right)^2=1+4\sum_{n=1}^\infty \left(\frac{q^{4n-3}}{1-q^{4n-3}}-\frac{q^{4n-1}}{1-q^{4n-1}}\right),$$
and since the right hand side equals $x^{-1}\pi \beta_d(\pi/2)$ by \eqref{eq:betarepresentation}, we have
$$\pi\beta_d\left(\frac{\pi}{2}\right)=x\vartheta_3^2(0,q).$$

Additionally, using \cite[Equation (9.2)]{Bellman:introduction-theta-functions}
$$\sum_{n=-\infty}^\infty e^{-xn^2}=\sqrt{\frac{\pi}{x}}\sum_{n=-\infty}^\infty e^{-\pi^2 n^2/x},$$
and the definition of $\vartheta_3$, this implies
$$x\vartheta_3^2(0,e^{-x})=\pi \vartheta_3^2\left(0,e^{-\frac{\pi^2}{x}}\right).$$

Hence, we obtain
$$\beta_d\left(\frac{\pi}{2}\right)=\vartheta_3^2\left(0,e^{-\frac{\pi^2}{x}}\right).$$
 

Monotonicity then follows from \eqref{eq:theta3_series}.
Recalling that $d=\pi^2/x$ and
$$\lim_{d \to \infty}\beta_d\left(\frac{\pi}{2}\right)=1,$$
we deduce that $\beta_d(\pi/2) \geq 1$ for $d \in (0,\infty)$.


\end{proof}

\section*{Acknowledgements}
JGS was partially supported by NSF through Grants DMS-2245017 and DMS-2247537; by the AGAUR
project 2021-SGR-0087 (Catalunya) and by the MICINN
(Spain) research grant number PID2021-125021NA-I00. We are very grateful to Susanna Haziot and Walter Strauss for many stimulating discussions and constructive comments.

\bibliographystyle{abbrv}
\bibliography{references}

\def\cprime{$'$}
\begin{thebibliography}{10}

\bibitem{Bellman:introduction-theta-functions}
R.~Bellman.
\newblock {\em A brief introduction to theta functions}.
\newblock Athena Series: Selected Topics in Mathematics. Holt, Rinehart and
  Winston, New York, 1961.

\bibitem{Constantin-Strauss-Varvaruca:global-bifurcation-gravity-water-waves}
A.~Constantin, W.~Strauss, and E.~V\u{a}rv\u{a}ruc\u{a}.
\newblock Global bifurcation of steady gravity water waves with critical
  layers.
\newblock {\em Acta Math.}, 217(2):195--262, 2016.

\bibitem{Constantin-Strauss-Varvaruca:large-amplitude-steady-downstream-water-waves}
A.~Constantin, W.~Strauss, and E.~V\u{a}rv\u{a}ruc\u{a}.
\newblock Large-amplitude steady downstream water waves.
\newblock {\em Comm. Math. Phys.}, 387(1):237--266, 2021.

\bibitem{Hardy-Wright:introduction-theory-numbers}
G.~H. Hardy and E.~M. Wright.
\newblock {\em An introduction to the theory of numbers}.
\newblock Oxford University Press, Oxford, sixth edition, 2008.
\newblock Revised by D. R. Heath-Brown and J. H. Silverman, With a foreword by
  Andrew Wiles.

\bibitem{Haziot-Hur-Strauss-Toland-Wahlen-Walsh-Wheeler:water-waves-survey}
S.~V. Haziot, V.~M. Hur, W.~A. Strauss, J.~F. Toland, E.~Wahl\'{e}n, S.~Walsh,
  and M.~H. Wheeler.
\newblock Traveling water waves---the ebb and flow of two centuries.
\newblock {\em Quart. Appl. Math.}, 80(2):317--401, 2022.

\bibitem{Haziot-Strauss:amplitude-bounds-steady-rotational-water-waves}
S.~V. Haziot and W.~A. Strauss.
\newblock Amplitude bounds of steady rotational water waves.
\newblock {\em arXiv preprint arXiv:2405.06529}, 2024.

\bibitem{Jacobi:fundamenta}
C.~Jacobi.
\newblock {\em Fundamenta nova theoriae functionum ellipticarum}.
\newblock Regiomonti, Sumtibus Fratrum Borntr{\"a}ger, K{\"o}nigsberg, Germany,
  1829.

\bibitem{King:hilbert-transforms-i}
F.~W. King.
\newblock {\em Hilbert transforms. {V}ol. 1}, volume 124 of {\em Encyclopedia
  of Mathematics and its Applications}.
\newblock Cambridge University Press, Cambridge, 2009.

\bibitem{King:hilbert-transforms-ii}
F.~W. King.
\newblock {\em Hilbert transforms. {V}ol. 2}, volume 125 of {\em Encyclopedia
  of Mathematics and its Applications}.
\newblock Cambridge University Press, Cambridge, 2009.

\bibitem{Knopp:infinite-series-book}
K.~Knopp.
\newblock {\em Theory and application of infinite series}.
\newblock Courier Corporation, 1990.

\bibitem{Lannes:water-waves-book}
D.~Lannes.
\newblock {\em The Water Waves Problem: Mathematical Analysis and Asymptotics}.
\newblock Mathematical Surveys and Monographs. Amer Mathematical Society, 2013.

\bibitem{Mumford:tata-theta-i}
D.~Mumford.
\newblock {\em Tata lectures on theta. {I}}.
\newblock Modern Birkh\"{a}user Classics. Birkh\"{a}user Boston, Inc., Boston,
  MA, 2007.
\newblock With the collaboration of C. Musili, M. Nori, E. Previato and M.
  Stillman, Reprint of the 1983 edition.

\end{thebibliography}

\begin{tabular}{l}
\textbf{Javier G\'omez-Serrano}\\
{Department of Mathematics} \\
{Brown University} \\
{314 Kassar House, 151 Thayer St.} \\
{Providence, RI 02912, USA} \\
{Email: javier\_gomez\_serrano@brown.edu} \\ \\
\textbf{Sieon Kim}\\
{Seoul International School} \\
{15, Seongnam-daero,}\\
{1518 beon-gil, Sujeong-gu,}\\
{Seongnam-si, Gyeonggi-do,}\\
{South Korea 13113}\\
{Email: sieon.kim25@stu.siskorea.org}\\ \\
\end{tabular}
\end{document}